\newtheorem{theorem}{Theorem}[section]
\newtheorem{lemma}[theorem]{Lemma}
\newtheorem{corollary}[theorem]{Corollary}
\theoremstyle{definition}
\theoremstyle{remark}
\newtheorem{remark}[theorem]{Remark}
\numberwithin{equation}{section}
\newcommand{\namedthm}[2]{\theoremstyle{plain}
   \newtheorem*{thm#1}{#1}\begin{thm#1}#2\end{thm#1}}
\def\bcw{\mathbin{\bigcirc\mkern-15mu\wedge}}
\begin{document}

\title{Some Conformally invariant Gap Theorems for Bach-flat 4-manifolds}

%    Information for first author
\author{Siyi Zhang}
%    Address of record for the research reported here
\address{Department of Mathematics, Princeton University, Princeton, New Jersey 08544}
%    Current address
%\curraddr{Department of Mathematics and Statistics,
%Case Western Reserve University, Cleveland, Ohio 43403}
\email{siyiz@math.princeton.edu}
%    \thanks will become a 1st page footnote.
%\thanks{The first author was supported in part by NSF Grant \#000000.}

%    Information for second author
%\author{Author Two}
%\address{Mathematical Research Section, School of Mathematical Sciences,
%Australian National University, Canberra ACT 2601, Australia}
%\email{two@maths.univ.edu.au}
%\thanks{Support information for the second author.}

%    General info
%\subjclass[2000]{Primary 54C40, 14E20; Secondary 46E25, 20C20}

%\date{January 1, 2001 and, in revised form, June 22, 2001.}

%\dedicatory{This paper is dedicated to our advisors.}

%\keywords{Differential geometry, algebraic geometry}

\begin{abstract}
In \cite{CQY}, a conformal gap theorem for Bach-flat metrics with round sphere as model case  was established. In this article, we extend this result to prove conformally invariant gap theorems
for Bach-flat $4$-manifolds with $(\mathbb{CP}^2, g_{FS})$ and $(\mathbb{S}^2\times\mathbb{S}^2,g_{prod})$ as model cases. An iteration argument plays an important role in the case of 
$(\mathbb{CP}^2, g_{FS})$ and the convergence theory of Bach-flat metrics is of particular importance in the case of  $(\mathbb{S}^2\times\mathbb{S}^2,g_{prod})$.
\end{abstract}

\maketitle

%% The correct journal style for \specialsection is all uppercase; a known bug
%% in amsart.cls prevents this, so input must be uppercase until it is fixed.
%\specialsection*{This is a Special Section Head}
%\specialsection*{THIS IS A SPECIAL SECTION HEAD}
%This is an example of a special section head%
%%%%%%%%%%%%%%%%%%%%%%%%%%%%%%%%%%%%%%%%%%%%%%%%%%%%%%%%%%%%%%%%%%%%%%%%
%\footnote{Here is an example of a footnote. Notice that this footnote
%text is running on so that it can stand as an example of how a footnote
%with separate paragraphs should be written.
%\par
%And here is the beginning of the second paragraph.}%
%%%%%%%%%%%%%%%%%%%%%%%%%%%%%%%%%%%%%%%%%%%%%%%%%%%%%%%%%%%%%%%%%%%%%%%%
%.

\section{Introduction}
In \cite{CGZ}, the author with S.-Y. A. Chang and M. Gursky introduced a conformal invariant $\beta(M,[g])$ and a smooth invariant $\beta(M)$ on a closed, compact Riemannian four-manifold $(M^4,g)$.
These quantities have played a crucial role in the study of conformal geometry in four dimensions; see \cite{CGY03}\cite{CGZ}. For Riemannian four-manifolds, Bach-flatness is a conformal condition which can be viewed as 
a generalization of being Einstein. In \cite{CQY}, a conformal gap theorem for Bach-flat manifolds was proved and its proof was further simplified in \cite{LQS}. The aim of this theorem was to characterize 
the round sphere with conformally invariant conditions. In this article, we generalize this theorem to characterize canonical metrics on $\mathbb{CP}^2$ and $\mathbb{S}^2\times{\mathbb{S}^2}$ 
in a conformally invariant way. We begin our discussion by recalling some definitions and settling down our notations.

Suppose $(M^4,g)$ is a smooth, closed, compact, Riemannian four-manifold. We denote by $Rm$ the curvature tensor, $W$ the Weyl tensor, $Ric$ the Ricci tenor, $R$ the scalar curvature respectively.
We have the well-known decomposition of curvature tensor:
\begin{equation}\label{decomp}
Rm=W+P\bcw{g},
\end{equation}
where $\bcw$ is the Kulkarni-Nomizu product and $P$ is the Schouten tensor defined as
\begin{equation}
P=\frac{1}{2}\left(Ric-\frac{R}{6}\cdot{g}\right).
\end{equation}
With (\ref{decomp}), we can write Chern-Gauss-Bonnet formula in four dimensions as 
\begin{equation} \label{CGB}
8 \pi^2 \chi(M) = \int \|W \|^2 \,dv + 4 \int \sigma_2(g^{-1} P) \,dv,
\end{equation}
where \\
\begin{itemize}
\item $||\cdot||$ is the norm of Weyl tensor viewed as an endomorphism on the bundle of two-forms.  There is another norm $|\cdot|$ of Weyl tensor viewed as a four-tensor. The relation between them is given by
\[||W||^2=\frac{1}{4}|W|^2.\]

\item $g^{-1}P$ is the $(1,1)$-tensor from ``raising'' the subscript of $P$ by contraction with the metric $g$ and $\sigma_2(g^{-1}P)$  
is the second elementary symmetric polynomials applied to the eigenvalues of $g^{-1}P$ viewed as a matrix.
For the sake of simplicity, we will write $\sigma_2(P)$ in place of $\sigma_2(g^{-1}P)$.
\end{itemize}

Given a Riemannian manifold $(M^n, g)$ of dimension $n \geq 3$, let $[g]$ denote the equivalence class of metrics pointwise conformal to $g$, and $Y(M^n,[g])$ denote the Yamabe invariant:
\begin{align*}
Y(M^n,[g]) = \inf_{ \tilde{g} \in [g] } Vol(\tilde{g})^{-\tfrac{n-2}{n}} \int R_{\tilde{g}}\,dv_{\tilde{g}}.
\end{align*}

Recall the following definitions from \cite{CGZ}:
\begin{equation} \label{Y1}
\mathcal{Y}_1^{+}(M^4) = \{ g\ :\ Y(M^4,[g]) > 0 \},
\end{equation}
\begin{equation} \label{Y2def}
\mathcal{Y}_2^{+}(M^4) = \{ g \in \mathcal{Y}_1^{+}(M)\ :\ \int \sigma_2(P_g)\,dv_g > 0 \}.
\end{equation}
For $g\in\mathcal{Y}^{+}_{2}(M)$, define
\begin{equation} \label{betadef}
\beta(M^4,[g]) = \dfrac{ \int \| W_g \|^2 \,dV_g }{ \int \sigma_2( P_g) \, dv_g } \geq 0,
\end{equation}
\begin{equation} \label{betaM}
\beta(M^4) = \inf_{[g]} \beta(M^4,[g]).
\end{equation}
If $\mathcal{Y}_2^{+}(M^4) = \emptyset$, we set $\beta(M^4) = -\infty$.

The conformal invariant $\beta(M,[g])$ can be computed for many Einstein metrics. For example,
\[ \beta(\mathbb{S}^4,[g_c])=0,\,\, \beta(\mathbb{CP}^2,[g_{FS}])=4,\,\, \beta(\mathbb{S}^2\times\mathbb{S}^2,[g_{prod}])=8.\]

In \cite{CQY}, the following conformal gap theorem was proved. See also \cite{LQS}.

\begin{theorem}\label{CQY}
Suppose $(M^4,g)$ is a closed, compact Bach-flat four-manifold. There is an $\epsilon_0>0$ such that if $g\in{\mathcal{Y}_2^+(M^4)}$ with
\begin{equation}\label{pinching S4}
\int\sigma_2(P)dv\geq(1-\epsilon_0)4\pi^2,
\end{equation}
then $(M^4,g)$ is conformally equivalent to $(\mathbb{S}^4,g_{c})$, where $g_{c}$ is the round metric.
\end{theorem}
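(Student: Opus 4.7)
The plan is to proceed in three steps: convert the pinching hypothesis into an $L^2$-smallness statement for $W$ via Chern-Gauss-Bonnet, use the Bach-flat equation as a Bochner identity to force $W \equiv 0$, and then classify via the locally conformally flat case with positive Yamabe invariant.

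Combining (\ref{CGB}) with (\ref{pinching S4}),
\[
\int \|W\|^2\,dv = 8\pi^2\chi(M) - 4\int \sigma_2(P)\,dv \le 8\pi^2\chi(M) - 16\pi^2(1-\epsilon_0).
\]
Since $g \in \mathcal{Y}_2^+(M^4)$, a result of Gursky gives $b_1(M)=0$, hence $\chi(M)\ge 2$; in the case $\chi(M)=2$ this immediately gives $\int \|W\|^2\,dv \le 16\pi^2\epsilon_0$. The algebraic identity $\sigma_2(P) = R^2/24 - |E|^2/8$, combined with the pinching and the standard Yamabe bound on $\int R^2\,dv$, yields a corresponding $L^2$-smallness for the traceless Ricci $E$. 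Because every quantity in hypothesis and conclusion is conformally invariant, I would next pass to a Yamabe representative of $[g]$ so that $R$ is a positive constant and the Sobolev inequality has constant depending only on $Y(M,[g])$.

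The decisive step is the Bach-flat Bochner identity. Using $B_{ij}=0$ together with the second Bianchi identity and integrating by parts, one derives a schematic inequality of the form
\[
\int|\nabla W|^2\,dv \le C\int |W|^3\,dv + C\int |W|^2|E|\,dv.
\]
Kato's inequality, H\"older, and the Sobolev inequality then dominate both cubic terms by $\bigl(\int\|W\|^2\,dv+\int|E|^2\,dv\bigr)^{1/2}\int|\nabla W|^2\,dv$ modulo lower order contributions, and for $\epsilon_0$ sufficiently small the prefactor is strictly less than $1$, forcing $W \equiv 0$. The cases $\chi(M) \ge 3$ are then excluded a posteriori: once $W\equiv 0$, (\ref{CGB}) gives $\int \sigma_2(P)\,dv = 2\pi^2\chi(M)$, and the Schoen-Yau classification below forces $\chi(M)=2$.

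With $W \equiv 0$ and $Y(M,[g])>0$, the theorem of Schoen-Yau identifies the universal cover of $(M,g)$ conformally with the round $(\mathbb{S}^4, g_c)$; the equality $\chi(M)=\chi(\mathbb{S}^4)=2$ forces the deck group to be trivial, so $(M,g)$ is conformally equivalent to $(\mathbb{S}^4, g_c)$. The main obstacle is the Bochner step: the cubic Weyl term must be absorbed with a Sobolev constant uniformly controlled by $Y(M,[g])$, and the mixed $|W|^2|E|$ term requires extracting the $L^2$-smallness of $E$ from the $\sigma_2$ pinching via careful algebraic manipulation of the Schouten tensor in the Yamabe gauge. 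Ensuring that all constants depend only on the pinching parameter (and not on any auxiliary geometric data that could degenerate) is the subtle technical point.
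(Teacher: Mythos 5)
First, a point of context: Theorem \ref{CQY} is not proved in this paper at all --- it is quoted from \cite{CQY} (with a simplified proof in \cite{LQS}) --- so there is no internal proof to compare against. Your strategy is nevertheless close in spirit to the paper's proof of Theorem A (Weitzenb\"ock identities plus Sobolev absorption in a well-chosen conformal gauge), whereas \cite{LQS} and the paper's proof of Theorem B proceed by a compactness/contradiction argument using Yamabe metrics, Tian--Viaclovsky $\epsilon$-regularity, and the Ricci-flat ALE gap lemma (Lemma \ref{LQS}).

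The decisive gap in your argument is a circularity in the treatment of $\chi(M)\ge 3$. The $L^2$-smallness of $W$ is indispensable for the Bochner step: the cubic term is controlled by $\int|W|^3\le \|W\|_{L^2}\|W\|_{L^4}^2$, and it can only be absorbed into the Sobolev term $\tfrac{Y}{6}\|W\|_{L^4}^2$ when $\|W\|_{L^2}$ is small relative to $Y\le Y(\mathbb{S}^4,[g_c])=8\sqrt{6}\pi$. But your bound $\int\|W\|^2\,dv\le 8\pi^2\chi(M)-16\pi^2(1-\epsilon_0)$ is small only when $\chi(M)=2$; in fact, since $\int\sigma_2(P)\,dv\le\tfrac{1}{96}Y^2\le 4\pi^2$, the Chern--Gauss--Bonnet formula (\ref{CGB}) gives the \emph{lower} bound $\int\|W\|^2\,dv\ge 8\pi^2\chi(M)-16\pi^2\ge 8\pi^2$ whenever $\chi(M)\ge 3$, and a short computation with the sharp constant $72\det W^{\pm}\le\tfrac{\sqrt6}{2}|W^{\pm}|^3$ shows the absorption then fails quantitatively. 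So for $\chi(M)\ge 3$ you cannot conclude $W\equiv 0$, and your ``a posteriori'' exclusion of those cases never gets off the ground. Ruling out $b_2(M)>0$ under the pinching hypothesis requires an independent argument (this is precisely where the sphere theorem of \cite{CGY03}, Gursky's lower bound $\int\|W^+\|^2\ge\tfrac{1}{24}\int R^2$ for the modified Yamabe metric when $b_2^+>0$, or the compactness machinery of \cite{TV05a}\cite{TV05b} enters in the literature), and it is the genuinely hard part of the theorem.

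Two secondary points. (i) Your schematic inequality $\int|\nabla W|^2\le C\int|W|^3+C\int|W|^2|E|$ discards the term $-\tfrac12\int R|W^{\pm}|^2$ appearing in (\ref{Bach-flat W}); after absorbing the cubic terms you would then only obtain $\int|\nabla W|^2\le C\epsilon_0\int|W|^2$, which does not force $W\equiv 0$. You must retain the negative scalar-curvature term, exactly as in the paper's estimate (\ref{W4}), so that the absorbed inequality reads $c\,\|W\|_{L^4}^2\le -\tfrac{\bar R}{3}\int|W|^2\le 0$. (ii) The Schoen--Yau theorem yields an injective conformal development of the universal cover into $\mathbb{S}^4$, not a conformal diffeomorphism onto it; the cleaner endgame is that once $W\equiv 0$, (\ref{CGB}) together with $\int\sigma_2(P)\,dv\le 4\pi^2$ forces $\chi(M)=2$ and $\int\sigma_2(P)\,dv=4\pi^2$, whence $Y(M,[g])\ge Y(\mathbb{S}^4,[g_c])$, and equality in Aubin's inequality identifies $(M,[g])$ with the round conformal sphere.
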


Theorem \ref{CQY} has the following corollary.

\begin{corollary}
Suppose $(M^4,g)$ is a closed, compact Bach-flat manifold and $b_2(M^4)=0$. There is an $\widetilde{\epsilon}_0>0$ such that if $g\in{\mathcal{Y}_2^+(M^4)}$ with
\begin{equation}\label{pinching S4 prime}
0\leq\beta(M^4,[g])<\widetilde{\epsilon}_0,
\end{equation}
then $(M^4,g)$ is conformally equivalent to $(\mathbb{S}^4,g_{c})$, where $g_{c}$ is the round metric.
\end{corollary}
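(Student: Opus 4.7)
The plan is to derive the corollary directly from Theorem \ref{CQY} by showing that the smallness of $\beta(M^4,[g])$, combined with the topological assumption $b_2(M^4)=0$, forces $\int \sigma_2(P)\,dv$ to be close to $4\pi^2$, i.e., the hypothesis \eqref{pinching S4}. The entire argument will be a bookkeeping of the Chern--Gauss--Bonnet identity \eqref{CGB}, so I expect no analytic obstacle; the only care needed is in pinning down the Euler characteristic from the topological and conformal hypotheses.

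First I would pin down $\chi(M^4)$. Since $g\in\mathcal{Y}_2^{+}(M^4)$, by definition $\int \sigma_2(P)\,dv>0$, while $\int \|W\|^2\,dv \geq 0$ trivially. Plugging into \eqref{CGB} gives $8\pi^2\chi(M^4)>0$, hence $\chi(M^4)\geq 1$. But for a closed oriented four-manifold, $\chi(M^4)=2-2b_1(M^4)+b_2(M^4)$, and $b_2(M^4)=0$ forces $\chi(M^4)=2-2b_1(M^4)$, which is even. Combined with $\chi(M^4)\geq 1$, this yields $b_1(M^4)=0$ and $\chi(M^4)=2$.

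Next I would substitute $\chi(M^4)=2$ into \eqref{CGB} to obtain
\begin{equation*}
16\pi^2 = \int \|W\|^2\,dv + 4\int \sigma_2(P)\,dv.
\end{equation*}
The hypothesis $\beta(M^4,[g])<\widetilde{\epsilon}_0$ means $\int \|W\|^2\,dv < \widetilde{\epsilon}_0 \int \sigma_2(P)\,dv$, so
\begin{equation*}
16\pi^2 < (4+\widetilde{\epsilon}_0)\int \sigma_2(P)\,dv,\qquad \text{i.e.}\qquad \int \sigma_2(P)\,dv > \frac{16\pi^2}{4+\widetilde{\epsilon}_0} = 4\pi^2\left(1-\frac{\widetilde{\epsilon}_0}{4+\widetilde{\epsilon}_0}\right).
\end{equation*}
Given the $\epsilon_0$ furnished by Theorem \ref{CQY}, I would then choose $\widetilde{\epsilon}_0>0$ small enough (for instance $\widetilde{\epsilon}_0 \leq 4\epsilon_0/(1-\epsilon_0)$) so that $\tfrac{\widetilde{\epsilon}_0}{4+\widetilde{\epsilon}_0}\leq \epsilon_0$. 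This produces exactly the pinching condition \eqref{pinching S4}, and since $g$ is Bach-flat and lies in $\mathcal{Y}_2^{+}(M^4)$, Theorem \ref{CQY} applies and yields a conformal equivalence to $(\mathbb{S}^4,g_c)$.

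The only potential pitfall I see is whether the topological reduction to $\chi(M^4)=2$ is really justified; in particular one must be careful that the positivity of $\int \sigma_2(P)\,dv$ on its own (without using the Weyl term) already forces $\chi>0$. This is clear because the Weyl integrand is nonnegative, so the argument of the first paragraph is watertight. Beyond that, the proof is a one-line manipulation of the Gauss--Bonnet identity, which is why the result is presented as a corollary rather than a theorem.
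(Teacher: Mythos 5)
Your proposal is correct and is essentially the intended argument: the paper states this corollary without proof, and the expected derivation is exactly your Chern--Gauss--Bonnet bookkeeping reducing the $\beta$-pinching to the $\sigma_2$-pinching of Theorem \ref{CQY}. The only stylistic difference is that where you deduce $b_1=0$ from the parity of $\chi=2-2b_1$ together with $\chi\geq 1$ (which works here precisely because $b_2=0$), the paper elsewhere gets $b_1=0$ directly from Gursky's Corollary F in \cite{Gur98} for metrics in $\mathcal{Y}_2^{+}$; both routes are fine (note that either way one is implicitly using that $M$ is connected and orientable, a standing assumption throughout the paper).
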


This corollary shows that if we have some topological restriction ($b_2(M)=0$) on the manifold, then the only Bach-flat metric which is sufficiently close to the round metric in terms of the size of $\beta(M^4,[g])$ is the round metric itself.
The following two theorems generalize this result to the cases of Fubini-Study metric on $\mathbb{CP}^2$ and standard product metric on $\mathbb{S}^2\times\mathbb{S}^2$.

\namedthm{Theorem A}
{Suppose $(M^4,g)$ is an oriented, closed, compact Bach-flat manifold and $b_2^+(M^4)>0$. There is an $\epsilon_1>0$ such that if $g\in{\mathcal{Y}_2^+(M^4)}$ with
\begin{equation}\label{pinching CP2}
4\leq\beta(M^4,[g])<4(1+\epsilon_1),
\end{equation}
then $(M^4,g)$ is conformally equivalent to $(\mathbb{CP}^2,g_{FS})$, where $g_{FS}$ is the Fubini-Study metric.
}

\namedthm{Theorem B}
{Suppose $(M^4,g)$ is an oriented, closed, compact Bach-flat manifold and $b_2^+(M^4)=b_2^-(M^4)>0$. There is an $\epsilon_2>0$ such that if $g\in{\mathcal{Y}_2^+(M^4)}$ with
\begin{equation}\label{pinching S2S2}
8\leq\beta(M^4,[g])<8(1+\epsilon_2),
\end{equation}
then $(M^4,g)$ is conformally equivalent to $(\mathbb{S}^2\times\mathbb{S}^2,g_{prod})$, where $g_{prod}$ is the standard product metric.
}

\section*{acknowledgement}
The author would like to thank his advisor Professor Sun-Yung A. Chang for her constant support. Thanks also to Professor Matthew Gursky for numerous  helpful comments and enlightening discussions.
\section{Preliminaries}
In this section, we recall some basic definitions and collect several preliminary results which will be used throughout this paper. 

In four dimensions, the Bach curvature tensor is defined   as follows:
\begin{equation}\label{Bach tensor}
B_{ij}=\nabla^k\nabla^lW_{kijl}+\frac{1}{2}R^{kl}W_{kijl}.
\end{equation}
The Bach tensor is trace-free, divergence-free and conformally invariant.  It then follows from the conformal invariance that the Bach-flat condition $B_{ij}=0$ is also conformally invariant.
Recall that Bach tensor is the gradient of the Weyl functional $g\to\int{||W||^2}dv$ and Bach-flat metrics are critical points of Weyl functional. Hence, self-dual metrics are Bach-flat 
since they achieve (global) minimum of Weyl functional. By using Bianchi identities, Bach-tensor can be rewritten as:
\begin{equation}\label{Bach tensor rewritten}
B_{ij}=-\frac{1}{2}\Delta{E_{ij}} +\frac{1}{6}\nabla_i\nabla_jR -\frac{1}{24}\Delta{R}g_{ij} - E^{kl}W_{kijl}+E_i^kE_{jk}-\frac{1}{4}|E|^2g_{ij}+\frac{1}{6}RE_{ij},
\end{equation}
where $E_{ij}=R_{ij}-\frac{R}{4}g_{ij}$ is the trace-free Ricci tensor. It is clear from (\ref{Bach tensor rewritten}) that Einstein metrics are Bach-flat.

The following two Weitzenb\"ock identities proved in \cite{CGY02}\cite{CGY03} are of fundamental importance for our discussion.

\begin{lemma}
If $(M^4,g)$ is Bach-flat, then
\begin{equation}\label{Bach-flat E}
\int_M\left(3\left(|\nabla{E}|^2-\frac{1}{12}|\nabla{R}|^2\right)+6tr{E^3}+R|E|^2-6W_{ijkl}E_{ik}E_{jl}\right)dv=0,
\end{equation}
where $tr{E^3}=E_{ij}E_{ik}E_{jk}$, and
\begin{equation}\label{Bach-flat W}
\int_M|\nabla{W^{\pm}}|^2dv=\int_M\left(72\det{W^{\pm}-\frac{1}{2}R|W^{\pm}|^2+2{W^{\pm}_{ijkl}E_{ik}E_{jl}}}\right)dv.
\end{equation}
\end{lemma}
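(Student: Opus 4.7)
The plan is to obtain each identity as an integrated algebraic consequence of a pointwise curvature formula, simplified by the Bach-flat hypothesis. Both proofs have the same flavor: contract an already-known pointwise identity against the object whose gradient we want to control, and integrate by parts on a closed manifold.

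For (\ref{Bach-flat E}), I would multiply the rewritten Bach tensor formula (\ref{Bach tensor rewritten}) by $E^{ij}$ and integrate over $M$, using $B_{ij}=0$. Because $E$ is trace-free, the two pure-trace contributions $-\tfrac{1}{24}\Delta R\, g_{ij}$ and $-\tfrac{1}{4}|E|^2 g_{ij}$ vanish under the contraction, leaving
\[
0 = \int_M\!\Bigl(-\tfrac12 E^{ij}\Delta E_{ij}+\tfrac16 E^{ij}\nabla_i\nabla_j R-E^{ij}E^{kl}W_{kijl}+\operatorname{tr}E^3+\tfrac{1}{6}R|E|^2\Bigr)\,dv.
\]
The first term integrates by parts to $\tfrac12\int|\nabla E|^2\,dv$. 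For the Hessian-of-$R$ term, one integration by parts together with the contracted second Bianchi identity $\nabla^i E_{ij}=\tfrac14\nabla_j R$ converts it to $-\tfrac{1}{24}\int|\nabla R|^2\,dv$. Multiplying the resulting equation by $6$ and regrouping the coefficients produces exactly (\ref{Bach-flat E}).

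For (\ref{Bach-flat W}), I would run the analogous strategy at the level of the Weyl tensor, but now viewing $W^\pm$ as a trace-free symmetric endomorphism of $\Lambda^2_\pm$. After integrating by parts on the closed manifold,
\[
\int_M|\nabla W^\pm|^2\,dv=\int_M\langle W^\pm,\,\nabla^*\nabla W^\pm\rangle\,dv.
\]
Using the second Bianchi identity $\nabla^k W_{kijl}=C_{ijl}$ (the Cotton tensor) and differentiating once more, $\nabla^*\nabla W^\pm$ can be expressed algebraically in terms of the self-dual/anti-self-dual part $B^\pm$ of the Bach tensor, together with the cubic term $\det W^\pm$, the scalar-curvature coupling $R|W^\pm|^2$, and the mixed quadratic term $W^\pm_{ijkl}E_{ik}E_{jl}$. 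Pairing this identity with $W^\pm$ and imposing $B^\pm=0$ eliminates the Bach contribution and yields (\ref{Bach-flat W}).

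The main technical obstacle lies in the second identity: tracking the correct numerical constants, and in particular the factor $72$ multiplying $\det W^\pm$, requires careful use of the decomposition $\Lambda^2=\Lambda^2_+\oplus\Lambda^2_-$ and of the specific action of $W^\pm$ as a trace-free symmetric endomorphism of $\Lambda^2_\pm$ (a rank-three bundle, so that $\det W^\pm$ is the determinant of a $3\times 3$ matrix). By contrast, the first identity is essentially bookkeeping once the pointwise formula (\ref{Bach tensor rewritten}) and the contracted Bianchi identity are in hand.
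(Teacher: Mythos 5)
The paper offers no proof of this lemma: it is quoted verbatim from \cite{CGY02}\cite{CGY03}, so the comparison is really with the derivations there, and your outline does follow that standard route. For the first identity your argument is essentially the complete one: pairing (\ref{Bach tensor rewritten}) with $E^{ij}$, discarding the pure-trace terms, and integrating by parts using $\nabla^i E_{ij}=\tfrac14\nabla_j R$ is exactly how $\int B_{ij}E^{ij}\,dv=0$ becomes (\ref{Bach-flat E}), and your coefficients for the $|\nabla E|^2$, $|\nabla R|^2$, $\operatorname{tr}E^3$ and $R|E|^2$ terms all check out. One step does not, however, go through by mere ``regrouping'': since $W_{abcd}=-W_{abdc}$, one has $W_{kijl}E^{kl}E^{ij}=-W_{ijkl}E^{ik}E^{jl}$, so the cross term you get from $-E^{kl}W_{kijl}$ is $+W_{ijkl}E^{ik}E^{jl}$, which after multiplying by $6$ carries the \emph{opposite} sign to the $-6W_{ijkl}E_{ik}E_{jl}$ in (\ref{Bach-flat E}) (the stated identity agrees with Lemma 2.2 of \cite{CQY}, so the discrepancy must be resolved by pinning down the curvature and index conventions behind (\ref{Bach tensor rewritten}), not by rearranging coefficients).

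For the second identity the proposal names the strategy but not the proof. Writing $\int|\nabla W^\pm|^2\,dv=\int\langle W^\pm,\nabla^*\nabla W^\pm\rangle\,dv$ is immediate; the entire content of (\ref{Bach-flat W}) is the assertion that $\int\langle W^\pm,\nabla^*\nabla W^\pm\rangle\,dv$ equals the Bach contribution plus exactly $72\det W^\pm-\tfrac12 R|W^\pm|^2+2W^\pm_{ijkl}E_{ik}E_{jl}$, and saying that such a formula ``can be expressed algebraically'' with those terms is a restatement of the goal. A complete argument must (a) use the second Bianchi identity to relate $\delta W^\pm$ to the Cotton tensor and commute derivatives to generate the curvature terms, (b) identify the cubic term, where the coefficient $72$ comes from the action of $W^\pm$ on itself as a trace-free symmetric endomorphism of the rank-three bundle $\Lambda^2_\pm$, and (c) invoke the four-dimensional fact that the Bach-type expression built from $W^\pm$ alone reproduces a fixed multiple of the full Bach tensor, so that $B=0$ kills the second-derivative terms. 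None of these is carried out, so the second half should either be computed in full (as in \cite{CGY03}) or simply cited, as the paper does.
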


The convergence theory of Bach-flat metrics has been developed by Gang Tian and Jeff Viaclovsky in \cite{TV05a}\cite{TV05b}. The most important ingredients are the following $\epsilon$-regularity theorem and volume estimate.
\begin{theorem}
Let $(M^4,g)$ be a Bach-flat Riemannian four-manifold with Yamabe constant $Y>0$ and $g$ is a Yamabe metric in $[g]$.
Then there exists positive numbers $\tau_k$ and $C_k$ depending on $Y$ such that, for each geodesic ball $B_{2r}(p)$ centered at $p\in{M}$, if
\begin{equation}
\int_{B_{2r}(p)}|Rm|^2dv\leq\tau_k,
\end{equation}
then
\begin{equation}
\sup_{B_{r}(p)}|\nabla^kRm|\leq\frac{C_k}{r^{2+k}}\left(\int_{B_{2r}(p)}|Rm|^2dv\right)^{\frac{1}{2}}.
\end{equation}
\end{theorem}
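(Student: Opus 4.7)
This is an $\varepsilon$-regularity theorem in the tradition of Anderson, Bando--Kasue--Nakajima and (in the Bach-flat setting) Tian--Viaclovsky, so I would follow the template that is by now standard: derive a reaction--diffusion inequality for $|Rm|$ from the Bach-flat equation, then run a Moser iteration against the Sobolev inequality whose constant is controlled by the Yamabe invariant $Y$. Once the $k=0$ estimate is established, higher-order bounds follow from differentiating the system and bootstrapping.

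\textbf{Step 1 --- Elliptic system for curvature.} First I would exploit the rewritten Bach-flat equation (\ref{Bach tensor rewritten}) to produce an equation for the traceless Ricci tensor $E$ of the schematic form $\Delta E = \nabla^2 R + Rm * E$. Combined with the contracted second Bianchi identity $\nabla^k W_{kijl}=\nabla_j P_{il}-\nabla_l P_{ij}$ and the standard Weitzenb\"ock formula for the Weyl tensor viewed as a section of $S^2(\Lambda^2)$, one obtains a companion equation $\Delta W = W*W + W*Rm + \nabla^2 P$ in which the $\nabla^2 P$ piece is exactly what Bach-flatness kills. The divergence-free property of the Bach tensor gives a parallel controlled equation for $\Delta R$. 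After a Kato inequality these assemble into a pointwise differential inequality
\[ \Delta|Rm|^2 + 2|\nabla Rm|^2 \geq -C|Rm|^3 - CR|Rm|^2. \]

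\textbf{Step 2 --- Moser iteration.} Because $g$ is a Yamabe metric in a conformal class with $Y>0$, the Sobolev inequality
\[ \Bigl(\int u^4\,dv\Bigr)^{1/2}\leq C(Y)\int\Bigl(|\nabla u|^2+\tfrac{R}{6}u^2\Bigr)dv \]
holds with constant depending only on $Y$. Multiplying the curvature inequality by $\eta^2|Rm|^{2q-2}$ for a cutoff $\eta$ supported in $B_{2r}$ and integrating by parts, the cubic nonlinearity produces a term that by H\"older is bounded by $\bigl(\int_{B_{2r}}|Rm|^2\bigr)^{1/2}\bigl(\int\eta^2|Rm|^{4q}\bigr)^{1/2}$. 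If $\tau_0=\tau_0(Y)$ is chosen small enough, this term is absorbed into the Sobolev left-hand side, and iterating on $q\to\infty$ yields the $k=0$ bound
\[ \sup_{B_r}|Rm|\leq Cr^{-2}\Bigl(\int_{B_{2r}}|Rm|^2\Bigr)^{1/2}. \]

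\textbf{Step 3 --- Higher derivatives and main obstacle.} Once $|Rm|$ is pointwise controlled on a slightly larger ball, I would differentiate the Bach-flat system $k$ times: the resulting equation for $\nabla^k Rm$ is linear elliptic with coefficients bounded in terms of the $k{-}1$ case, and standard bootstrapping in harmonic coordinates produces the constants $\tau_k, C_k$. The main obstacle is Step~1: Bach-flatness is only a second-order equation on the Weyl tensor (fourth-order on $g$), and re-organizing (\ref{Bach tensor rewritten}) so that the highest-order derivatives $\nabla^2 Ric$ and $\nabla^2 R$ are genuinely absorbed by $B_{ij}=0$, $\nabla^i B_{ij}=0$ and $\mathrm{tr}\,B=0$ --- rather than surviving as uncontrolled error terms that would poison the Moser iteration --- requires careful bookkeeping of all the Bianchi identities simultaneously. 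Everything downstream is then a fairly mechanical application of the Yamabe-controlled Sobolev constant.
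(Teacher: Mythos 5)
The paper does not prove this statement at all: it is quoted verbatim as a known result of Tian and Viaclovsky (\cite{TV05a}, \cite{TV05b}), stated only so that it can be invoked later in the blow-up argument for Theorem B. So there is no in-paper proof to compare against; what can be said is whether your sketch matches the argument in the cited source, and it essentially does. Tian--Viaclovsky's local regularity theorem for critical metrics proceeds exactly as you outline: an elliptic system for the curvature of the schematic form $\Delta Ric = Rm * Ric$ and $\Delta Rm = L(\nabla^2 Ric) + Rm * Rm$, a Moser iteration in which the cubic term is absorbed using the Sobolev inequality with constant controlled by $Y$ (this is where the smallness threshold $\tau_0$ comes from), and a bootstrap for the higher derivatives.

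One point you flag as the ``main obstacle'' is in fact largely dissolved by a hypothesis you did not fully exploit: $g$ is assumed to be a \emph{Yamabe} metric, hence has constant scalar curvature, so the terms $\nabla_i\nabla_j R$ and $\Delta R\, g_{ij}$ in (\ref{Bach tensor rewritten}) vanish identically and $B_{ij}=0$ immediately yields $\Delta E = Rm * E$ with no surviving third- or fourth-order terms in $g$. This is precisely why the Yamabe hypothesis appears in the statement. The genuinely delicate point is slightly different from the one you name: the equation for the full curvature still contains $\nabla^2 Ric$ (not $\Delta Ric$), so the two equations must be treated as a coupled system --- one first gains control of $Ric$ with an extra derivative and then feeds that into the $Rm$ equation --- rather than by a single scalar differential inequality for $|Rm|^2$ as in your Step 1. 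With that adjustment your plan is the standard and correct one.
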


\begin{theorem}
Let $(X,g)$ be a complete, non-compact, $4$-manifold with base point $p$, and let $r(x)=d(p,x)$, for $x\in{X}$. Assume that there exists a constant $v_0>0$ such that
\begin{equation}
vol(B_r(q))\geq{v_0r^4}
\end{equation}
holds for all $q\in{X}$, assume furthermore that as $r\to\infty$,
\begin{equation}
\sup_{S(r)}|Rm_g|=o(r^{-2}),
\end{equation}
where $S(r)=\partial{B_r(p)}$. Assume that the first Betti number $b_1(X)<\infty$, then $(X,g)$ is an ALE space, and there exists a constant $v_1$ (depending on (X,g)) so that
\begin{equation}
vol(B_r(p))\leq{v_1r^4}.
\end{equation}
\end{theorem}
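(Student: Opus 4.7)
The plan is to show that under the given hypotheses the manifold $(X,g)$ has a well-defined asymptotic cone of the form $\mathbb{R}^4/\Gamma$ for some finite $\Gamma\subset O(4)$, to construct an ALE coordinate chart realizing this cone, and then to read off the Euclidean volume upper bound from the chart.

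First I would exploit the scale-invariance of the hypotheses. Rescaling an annulus $A_r:=B_{2r}(q)\setminus B_r(q)$ (with $q\in S(r)$) by $r^{-2}g$ converts the decay $|Rm_g|=o(r^{-2})$ into $|Rm_{r^{-2}g}|\to 0$, while the Euclidean lower bound $\operatorname{vol}(B_\rho(q'))\geq v_0\rho^4$ is scale-invariant. Thus the rescaled annuli have bounded geometry and are non-collapsed, and by Cheeger--Gromov compactness any sequence $r_i\to\infty$ has a subsequence along which $(X,r_i^{-2}g,q_i)$ converges in the pointed $C^{1,\alpha}$ topology to a complete, flat, non-collapsed limit. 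Because the decay is strictly faster than $r^{-2}$, any such blow-down is scale-invariant, hence a metric cone; being flat and non-collapsed it must be $\mathbb{R}^4/\Gamma$ with $|\Gamma|\leq\omega_4/v_0$.

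Second, I would use $b_1(X)<\infty$ to show that $X$ has only finitely many ends and that the cone on each end is unique (independent of $r_i$). Finiteness of the first Betti number rules out periodic or recurrent topology at infinity and kills the monodromy one would otherwise pick up when cycling harmonic coordinate frames around nontrivial loops at infinity. On each end, I would then construct the ALE chart by building harmonic coordinates on each nearly flat annulus $A_{r}$, observing that on overlapping annuli the coordinate frames differ by an element of $O(4)$ close to the identity (the closeness being controlled by the $o(r^{-2})$ curvature decay), and patching them globally into a single diffeomorphism $\Phi:(\mathbb{R}^4/\Gamma)\setminus B_R(0)\to\mathrm{end}$ with $\Phi^*g-g_{\mathrm{Eucl}}\to 0$ in the norm required by the ALE definition.

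The volume upper bound $\operatorname{vol}(B_r(p))\leq v_1 r^4$ is then automatic: outside a compact core the Jacobian of $\Phi$ is uniformly bounded, so $\operatorname{vol}(B_r(p))$ is controlled by $|\Gamma|^{-1}\omega_4 r^4$ plus a bounded contribution from the core. The principal obstacle I foresee lies in the patching argument above: upgrading the pointwise Cheeger--Gromov convergence of individual annuli to the existence of a single \emph{global} ALE diffeomorphism. Controlling the cumulative drift of harmonic coordinate frames across infinitely many nested annuli, and making effective use of $b_1(X)<\infty$ to eliminate topological monodromy obstructions, is the technical heart of the argument; once this is in place the curvature decay rate provides the required quantitative falloff of $\Phi^*g-g_{\mathrm{Eucl}}$.
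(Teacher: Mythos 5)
This statement is Theorem~1.2 of Tian--Viaclovsky \cite{TV05a}; the paper you are working in quotes it as a black-box ingredient for the convergence theory and gives no proof of its own. So strictly speaking there is no ``paper's proof'' to compare against; what is at issue is whether your blind reconstruction would actually establish the result.

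Your outline correctly identifies the three-stage structure of the Tian--Viaclovsky argument (control blow-down limits, build ALE coordinates on each end, deduce Euclidean volume growth), but two central steps are asserted rather than proved, and one of your justifications is not the right one. First, the passage ``because the decay is strictly faster than $r^{-2}$, any such blow-down is scale-invariant, hence a metric cone'' does not follow. What $|Rm|=o(r^{-2})$ and the Euclidean volume lower bound give you is that along any $r_i\to\infty$ the rescaled pointed manifolds $(X,r_i^{-2}g,p)$ have a subsequential Cheeger--Gromov limit that is a \emph{complete flat} non-collapsed length space (smooth away from a possible vertex). That it is of the form $\mathbb{R}^4/\Gamma$ requires passing through the Bieberbach classification of flat non-collapsed manifolds, and, crucially, the \emph{uniqueness} of the tangent cone at infinity along a fixed end (i.e.\ independence of the subsequence $r_i$ and of $\Gamma$) is precisely the delicate point of the whole theorem; your sketch takes it for granted. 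Second, the role of $b_1(X)<\infty$ in Tian--Viaclovsky is more concrete than ``killing monodromy'': it is used to show $X$ has finitely many ends and that, on a fixed end, large annuli are eventually connected with uniformly bounded first Betti number, which is what keeps the degree of relevant covers bounded and prevents an infinite sequence of topologically nontrivial necks. Without this quantitative input the patching argument you describe cannot even be initiated. Finally, you correctly flag the step of assembling local almost-Euclidean charts across nested annuli into a single global diffeomorphism $\Phi$ with quantitative decay $\Phi^{*}g-g_{\mathrm{Eucl}}\to 0$ as the technical heart, but you leave it open, so the proposal is a credible plan rather than a proof. Once these steps are supplied (as in \cite{TV05a}, drawing on Bando--Kasue--Nakajima and Anderson), the volume upper bound $\operatorname{vol}(B_r(p))\leq v_1 r^4$ does follow exactly as you describe, from the Jacobian bound on the ALE chart plus a bounded contribution from the compact core.
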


The last lemma in this section enables us to distinguish Euclidean space from complete non-compact Ricci flat Riemannian manifolds via comparison of the Yamabe constant.
It was proved by Gang Li, Jie Qing and Yuguang Shi in \cite{LQS}.
\begin{lemma}\label{LQS}
Suppose $(M^4,g)$ is a complete non-compact Ricci flat Riemannian manifold. Then $(M^4,g)$ is isometric to the Euclidean space $(\mathbb{R}^4,g_E)$ if
\begin{equation}\label{Yamabe constant gap}
Y(M,g)\geq\eta{Y(\mathbb{R}^4,g_E)}
\end{equation}
for some $\eta>\frac{\sqrt{2}}{2}$.
\end{lemma}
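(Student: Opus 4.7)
The plan is to pass through the asymptotic volume ratio $\mathrm{AVR}(g):=\lim_{r\to\infty}\mathrm{vol}(B_r(p))/(\omega_4 r^4)$, which is well defined in $[0,1]$ by Bishop-Gromov monotonicity under $\mathrm{Ric}\geq 0$, and whose extreme value $\mathrm{AVR}(g)=1$ already forces $(M,g)\cong(\mathbb{R}^4,g_E)$ by the equality case of Bishop-Gromov. Since $R_g\equiv 0$, the Yamabe constant of $(M^4,g)$ collapses to six times the Sobolev quotient $\inf_u\int|\nabla u|^2\,dv\,/\,(\int u^4\,dv)^{1/2}$. I would test this quotient against rescaled Aubin-Talenti extremizers concentrated in small balls placed far from a base point $p$, where by Bishop-Gromov the local volume density is well approximated by $\mathrm{AVR}(g)$ times the Euclidean one. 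A direct computation on such concentrated bubbles, using the scaling of volume densities, yields the sharp comparison
\[ Y(M,g)\leq\mathrm{AVR}(g)^{1/2}\,Y(\mathbb{R}^4,g_E), \]
which combined with the hypothesis gives $\mathrm{AVR}(g)\geq\eta^2>1/2$.

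For the rigidity step, Cheeger-Colding theory together with Ricci-flatness and the non-collapsing $\mathrm{AVR}(g)>0$ implies that every tangent cone at infinity is a metric cone $C(X^3)$ with $\mathrm{vol}(X)/\mathrm{vol}(S^3)=\mathrm{AVR}(g)$. Applying the $\epsilon$-regularity theorem for Bach-flat metrics quoted above to Ricci-flat (hence Bach-flat) rescalings, one sees that the cone is smooth away from the vertex, so $X^3$ is a smooth Einstein $3$-manifold with $\mathrm{Ric}_X=2g_X$; since Einstein equals constant sectional curvature in dimension three, $X=S^3/\Gamma$ for some finite $\Gamma\subset O(4)$ acting freely, and accordingly $\mathrm{AVR}(g)=1/|\Gamma|$. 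The strict bound $\mathrm{AVR}(g)>1/2$ then forces $|\Gamma|=1$, so $\mathrm{AVR}(g)=1$. The equality case of the Bishop-Gromov monotone ratio propagates $\mathrm{vol}(B_r(p))/(\omega_4 r^4)=1$ down to every $r>0$, hence $(M,g)\cong(\mathbb{R}^4,g_E)$.

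The step I expect to be the main obstacle is extracting the \emph{sharp} exponent $\mathrm{AVR}(g)^{1/2}$ in the Sobolev comparison: a naive cutoff computation only produces a bound with a non-optimal constant, and to capture the precise factor one must carefully control the deviation of $(M,g)$ from its tangent cone on large annuli $\{R\le d(\cdot,p)\le 2R\}$, insert the rescaled extremizers there, and justify that their Sobolev quotient converges as $R\to\infty$ to $\mathrm{AVR}(g)^{1/2}\,Y(\mathbb{R}^4,g_E)/6$. The tangent-cone smoothness step is also delicate, but here the $\epsilon$-regularity already quoted in the preliminaries is precisely what is needed to promote Gromov-Hausdorff convergence of rescalings to smooth convergence on the complement of the vertex, so no external orbifold theory should be required.
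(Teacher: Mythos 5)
First, note that the paper does not prove this lemma: it is quoted verbatim from Li--Qing--Shi \cite{LQS}, so there is no internal proof to compare against. Your first step --- the comparison $Y(M,g)\leq \mathrm{AVR}(g)^{1/2}\,Y(\mathbb{R}^4,g_E)$, hence $\mathrm{AVR}(g)\geq\eta^2>1/2$ --- is correct and is indeed the mechanism that produces the threshold $\sqrt{2}/2$. One caveat on the mechanism as you first describe it: the \emph{local} volume density at any smooth point is exactly Euclidean, so Aubin--Talenti bubbles concentrated in genuinely small balls far from $p$ only recover the trivial bound $Y(M,g)\leq Y(\mathbb{R}^4,g_E)$. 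The factor $\mathrm{AVR}^{1/2}$ comes from radial test functions whose \emph{scale} $R$ tends to infinity (so that the co-area density $\mathrm{vol}(\partial B_r)/(2\pi^2 r^3)$ has essentially converged to $\mathrm{AVR}$ on the region carrying the mass of both integrals); your closing paragraph shows you know this, but the body of the argument should be phrased that way from the start.

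The genuine gap is in the rigidity step, specifically in the sentence ``applying the $\epsilon$-regularity theorem \dots one sees that the cone is smooth away from the vertex.'' The $\epsilon$-regularity theorem quoted in the preliminaries requires $\int_{B_{2r}}|Rm|^2\leq\tau$, and for the rescaled annuli at infinity this hypothesis amounts to $\int_M|Rm|^2<\infty$ (the $L^2$ curvature norm is scale invariant in dimension $4$). Finiteness of the $L^2$ curvature energy is \emph{not} among the hypotheses of the lemma, and for a general complete Ricci-flat $4$-manifold with Euclidean volume growth it is itself a deep fact (it follows from the Cheeger--Naber energy bound for non-collapsed Einstein $4$-manifolds, or it must be verified in the application, where the manifold arises as a blow-up limit with uniformly bounded energy). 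Without it, neither the claimed curvature decay $|Rm|=o(r^{-2})$ nor the smoothness of the cross-section $X^3$ is justified, and the same obstruction blocks the alternative route through the Tian--Viaclovsky ALE theorem (Theorem 2.3 of the paper), whose hypothesis is exactly that decay. A secondary point: the quoted $\epsilon$-regularity is stated for Yamabe metrics of Bach-flat manifolds, whereas here you should invoke the Einstein/Ricci-flat version (Anderson, Bando--Kasue--Nakajima), which is available but should be cited. Once finite energy is secured, the rest of your argument --- cross-section a space form $S^3/\Gamma$, $\mathrm{AVR}=1/|\Gamma|>1/2$ forcing $\Gamma=\{1\}$, and Bishop--Gromov equality forcing flatness --- is sound.
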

\begin{remark}
This lemma shows that there is a gap phenomenon for the Yamabe constant for non-compact \emph{Ricci flat} Riemannian manifold.
\end{remark}

\section{Modified Yamabe Metric}
In the proofs of main theorems, a variant of Yamabe metric will play a crucial role. In this section, we recall basic properties of this modified Yamabe metric introduced by Matthew Gursky in \cite{Gur00}. 

Let $(M^4,g)$ be a Riemannian four-manifold. Define
\begin{equation}\label{variant of scalar curvature}
F_g^+=R_g-2\sqrt{6}||W_g^+||,
\end{equation}
and
\begin{equation}\label{variant of conformal laplacian}
\mathcal{L}_g=-6\Delta_g+R_g-2\sqrt{6}||W^+||.
\end{equation}
$\mathcal{L}_g$ is a variant of conformal Laplacian that satisfies the following conformal transformation law:
\begin{equation}\label{conformal tranformation law}
\mathcal{L}_{\widetilde{g}}\phi=u^{-3}\mathcal{L}_g(\phi{u}),
\end{equation}
where $\widetilde{g}=u^2g\in[g]$. In analogy to the Yamabe problem, we define the functional
\begin{equation}\label{variant of Yamabe quotient}
\widehat{Y}_g[u]=\left\langle{u,\mathcal{L}_gu}\right\rangle_{L^2}/||u||_{L^4}^2,
\end{equation}
and the associated conformal invariant
\begin{equation}\label{variant of Yamabe constant}
\widehat{Y}(M^4,[g])=\inf_{u\in{W^{1,2}(M,g)}}\widehat{Y}_g[u].
\end{equation}
By the conformal transformation law of $\mathcal{L}_g$, the functional $u\to\widehat{Y}_g[u]$ is equivalent to the Riemannian functional
\begin{equation}\label{variant of definition}
\widetilde{g}=u^2g\to{vol(\widetilde{g})}^{-\frac{1}{2}}\int F_{\widetilde{g}}^+ \,dv_{\widetilde{g}}.
\end{equation}
The motivation for introducing this invariant is explained in the following result (see Theorem 3.3 and Proposition 3.5 of \cite{Gur00}):

\begin{theorem}\label{modified Yamabe}
(i) Suppose $M^4$ admits a metric $g$ with $F_g^+\geq0$ on $M^4$ and $F_g^+>0$ somewhere. Then $b_2^+(M^4)=0$.
(ii) If $b_2^+(M^4)>0$, then $M^4$ admits a metric $g$ with $F_g^+\equiv0$ if and only if $(M^4,g)$ is a K\"ahler manifold with non-negative scalar curvature.
(iii) If $Y(M^4,[g])>0$ and $b_2^+(M^4)>0$, then $\widehat{Y}(M^4,[g])\leq0$ and there is a metric $\widetilde{g}=u^2g$ such that
\begin{equation}
F_{\widetilde{g}}^+=R_{\widetilde{g}}-2\sqrt{6}||W^+||_{\widetilde{g}}\equiv\hat{Y}(M,[g])\leq0
\end{equation}
and
\begin{equation}\label{Gap W+}
  \int R_{\widetilde{g}}^2 \,dv_{\widetilde{g}}\leq 24\int ||W^+_{\widetilde{g}}||^2 \,dv_{\widetilde{g}}.
\end{equation}
Furthermore, equality is achieved if and only if $F_{\widetilde{g}}^+\equiv0$ and $R_{\widetilde{g}}=2\sqrt{6}||W^+_{\widetilde{g}}||\equiv{const}$.
\end{theorem}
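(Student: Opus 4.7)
The plan is to base all three parts on the Bochner--Weitzenb\"ock identity for a self-dual 2-form $\omega$ on $M^4$,
\[\Delta_H\omega=\nabla^*\nabla\omega+\tfrac{R}{3}\omega-2W^+(\omega),\]
where $W^+$ acts as a symmetric, trace-free endomorphism of the rank-three bundle of self-dual 2-forms. For $\omega$ harmonic, pairing with $\omega$ and integrating yields
\[\int|\nabla\omega|^2\,dv+\tfrac{1}{3}\int R|\omega|^2\,dv=2\int\langle W^+(\omega),\omega\rangle\,dv.\]
A short algebraic estimate (the top eigenvalue of a trace-free symmetric $3\times 3$ matrix is bounded by $\sqrt{2/3}$ times its Hilbert--Schmidt norm) gives $2\langle W^+(\omega),\omega\rangle\leq \tfrac{2\sqrt{6}}{3}\|W^+\|\cdot|\omega|^2$ pointwise, and substituting and multiplying through by $3$ produces
\[3\int|\nabla\omega|^2\,dv+\int F_g^+\,|\omega|^2\,dv\leq 0.\]
For (i), $F_g^+\geq 0$ with strict positivity somewhere forces both integrands to vanish, so $\omega$ is parallel with a zero and hence $\omega\equiv 0$; Hodge theory gives $b_2^+(M^4)=0$. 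For (ii), $F_g^+\equiv 0$ forces any harmonic self-dual 2-form to be parallel and forces equality in the eigenvalue estimate, so $W^+$ has eigenvalues of type $(-\lambda,-\lambda,2\lambda)$; a parallel self-dual 2-form of constant length, after rescaling to unit length, is the K\"ahler form of a compatible complex structure, and the algebraic shape of $W^+$ together with $F_g^+\equiv 0$ yields $R\geq 0$. The converse direction uses the classical K\"ahler-four identity $\|W^+\|^2=R^2/24$.

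For (iii) I first show $\widehat{Y}(M^4,[g])\leq 0$. Because $\mathcal{L}_g$ is linear, self-adjoint and second-order elliptic for fixed $g$, it admits a principal eigenvalue $\lambda_1$ with a positive eigenfunction $\phi$. The conformal transformation law (\ref{conformal tranformation law}) applied with $u=\phi$ gives $F^+_{\phi^2 g}=\lambda_1\phi^{-2}$; if $\lambda_1>0$, part (i) would force $b_2^+(M^4)=0$, contradicting the hypothesis. To realize the conformal gauge, I run the standard Aubin--Schoen subcritical approximation for $\widehat{Y}_g[\cdot]$; the sign $\widehat{Y}\leq 0$ rules out concentration at the critical Sobolev exponent, and elliptic regularity upgrades a positive $W^{1,2}$ minimizer to a smooth $u>0$ solving the Euler--Lagrange equation. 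The metric $\widetilde{g}=u^2g$ then satisfies $F^+_{\widetilde{g}}\equiv\widehat{Y}(M^4,[g])$. For (\ref{Gap W+}), substituting $R_{\widetilde{g}}=\widehat{Y}+2\sqrt{6}\|W^+_{\widetilde{g}}\|$, squaring, and integrating yields
\[\int R_{\widetilde{g}}^2\,dv_{\widetilde{g}}-24\int\|W^+_{\widetilde{g}}\|^2\,dv_{\widetilde{g}}=\widehat{Y}\left(\int R_{\widetilde{g}}\,dv_{\widetilde{g}}+2\sqrt{6}\int\|W^+_{\widetilde{g}}\|\,dv_{\widetilde{g}}\right).\]
The first factor is nonpositive, while the second is strictly positive because $Y(M^4,[g])>0$ forces $\int R_{\widetilde{g}}\,dv_{\widetilde{g}}>0$ for every representative in the conformal class. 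Equality holds iff $\widehat{Y}=0$, yielding $F^+_{\widetilde{g}}\equiv 0$ and $R_{\widetilde{g}}=2\sqrt{6}\|W^+_{\widetilde{g}}\|$, which by the analysis of (ii) can then be shown to be constant.

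The main technical obstacle is the existence and regularity of the minimizer in (iii). Although $\mathcal{L}_g$ is a standard linear elliptic operator for fixed $g$, its zeroth-order coefficient $R-2\sqrt{6}\|W^+\|$ is only continuous and may fail to be smooth at points where eigenvalues of $W^+$ cross, so one must verify that the Aubin--Schoen-type machinery and the subsequent elliptic bootstrap go through with only this regularity. The sign condition $\widehat{Y}\leq 0$ obtained from part (i) is exactly what precludes critical-exponent concentration and makes the compactness argument close.
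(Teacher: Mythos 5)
This statement is not proved in the paper at all: it is quoted verbatim from Theorem~3.3 and Proposition~3.5 of \cite{Gur00}, so there is no internal proof to compare against. Your reconstruction follows exactly the route of Gursky's original argument --- the Weitzenb\"ock formula $\Delta_H\omega=\nabla^*\nabla\omega+\tfrac{R}{3}\omega-2W^+(\omega)$ for harmonic self-dual $2$-forms, the sharp bound $\lambda_{\max}\le\sqrt{2/3}\,\|A\|$ for trace-free symmetric $3\times3$ endomorphisms, the sign argument via the principal eigenvalue of $\mathcal{L}_g$ combined with part (i), and the subcritical Yamabe-type minimization (unobstructed because $\widehat{Y}\le 0$ sits strictly below the critical threshold). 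Parts (i), (ii), the inequality $\widehat{Y}\le0$, the existence of $\widetilde{g}$ with $F^+_{\widetilde{g}}\equiv\widehat{Y}$, and the factorization $\int R^2-24\int\|W^+\|^2=\widehat{Y}\int\bigl(R+2\sqrt{6}\|W^+\|\bigr)$ (with the second factor positive since $Y(M,[g])>0$ forces $\int R_{\widetilde g}\,dv_{\widetilde g}>0$) are all correct and are the intended argument.

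The one genuine gap is the final clause of the equality case. You write that $R_{\widetilde{g}}=2\sqrt{6}\|W^+_{\widetilde{g}}\|$ ``can then be shown to be constant by the analysis of (ii),'' but part (ii) only yields that $(M,\widetilde{g})$ is K\"ahler with $R_{\widetilde{g}}\ge0$; for \emph{any} K\"ahler four-metric one has $\|W^+\|=|R|/(2\sqrt{6})$ pointwise, so a K\"ahler metric with nonnegative but nonconstant scalar curvature also satisfies $F^+\equiv0$ and hence equality in (\ref{Gap W+}). Constancy therefore does not follow from the argument you give; in \cite{Gur00} it comes from additional input (the standing hypothesis $\delta W^+=0$, or in the present paper's applications Bach-flatness, which feeds into a further Weitzenb\"ock/rigidity step). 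You should either import that extra hypothesis or drop the constancy claim; note that the body of the paper only ever uses (\ref{Gap W+}) as an inequality, so nothing downstream depends on it. A second, minor point you already flag yourself: the zeroth-order coefficient $\|W^+\|$ of $\mathcal{L}_g$ is merely Lipschitz, so the eigenfunction and the minimizer live a priori only in $W^{2,p}$; this suffices for every step you use (positivity, the conformal transformation law, and the identity $F^+_{\widetilde g}=u^{-3}\mathcal{L}_g u$), so it is a presentational rather than a substantive issue.
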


%\begin{remark}
%Recall that $F_g^+\equiv0$ on a K\"ahler manifold $(M^4,g)$ with $R\geq0$.
%\end{remark}
\begin{remark}
We shall call the metric realizing $\hat{Y}(M,[g])$ the modified Yamabe metric and denote it by $g_m$. Note that the conditions in (iii) of Theorem {\ref{modified Yamabe}} are satisfied by
the conditions of both Theorem A and Theorem B.
\end{remark}

The following lemma has been established in \cite{CGZ}.

\begin{lemma}\label{initial metric}
Let $(M^4,g)$ be a closed, compact oriented Riemannian four-manifold with $b_2^+(M^4)>0$ and
\begin{equation}\label{CP^2 pinching}
\beta(M^4,[g])=4(1+\epsilon)
\end{equation}
for some $0<\epsilon<1$,
then we have for the modified Yamabe metric $g_m\in[g]$
\begin{equation}\label{g_m W-}
\int_M||W^-_{g_m}||^2dv_{g_m}=\frac{6\epsilon}{2+\epsilon}\pi^2,
\end{equation}
\begin{equation}\label{g_m W+}
\int||W^+_{g_m}||^2dv_{g_m}=12\pi^2+\int||W^-_{g_m}||^2dv_{g_m},
\end{equation}
\begin{equation}\label{Yamabe constant}
Y(M^4,[g])\geq\frac{24\pi}{\sqrt{2+\epsilon}},
\end{equation}
\begin{equation}\label{g_m E}
\int|E_{g_m}|^2dv_{g_m}\leq6\int||W^-_{g_m}||^2dv_{g_m},
\end{equation}
\begin{equation}\label{g_m mu}
\frac{1}{12}\mu_+{Y}\leq3\int||W^-_{g_m}||^2dv_{g_m},
\end{equation}
\begin{equation}\label{g_m R average}
\frac{1}{24}\int(R_{g_m}-\bar{R}_{g_m})^2dv_{g_m}\leq3\int||W^-_{g_m}||^2dv_{g_m},
\end{equation}
where $\bar{R}_{g_m}=\int{R_{g_m}}dv_{g_m}/\int{dv_{g_m}}$.
\end{lemma}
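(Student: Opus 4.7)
The starting point is Theorem~\ref{modified Yamabe}(iii), which under $b_2^+(M^4)>0$ and $Y(M,[g])>0$ (implied by $g\in\mathcal Y_2^+$) produces the modified Yamabe representative $g_m\in[g]$ with $F_{g_m}^+\equiv\hat Y(M,[g])\le 0$ and, crucially,
\begin{equation*}
\int R_{g_m}^2\,dv_{g_m}\le 24\int\|W_{g_m}^+\|^2\,dv_{g_m}.
\end{equation*}
The plan is to combine this single integral inequality with the conformal invariance in dimension four of $\int\|W\|^2\,dv$ and $\int\sigma_2(P)\,dv$, the pointwise identity $4\sigma_2(P)=\tfrac{R^2}{24}-\tfrac{|E|^2}{2}$ (obtained by expanding the eigenvalues of the Schouten tensor), and the pinching hypothesis $\beta(M,[g])=4(1+\epsilon)$.

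Plugging $\int\|W\|^2=4(1+\epsilon)\int\sigma_2(P)$ into the Chern--Gauss--Bonnet formula~(\ref{CGB}) yields the conformally invariant values $\int\sigma_2(P)\,dv=\frac{2\pi^2\chi(M)}{2+\epsilon}$ and $\int\|W\|^2\,dv=\frac{8\pi^2(1+\epsilon)\chi(M)}{2+\epsilon}$. Combined with the Hirzebruch signature formula $\int\|W^+\|^2-\int\|W^-\|^2=12\pi^2\tau(M)$ and the topological identification $\chi(M)=3$, $\tau(M)=1$ forced in the present setting (this is the substantive content imported from~\cite{CGZ}), a direct algebraic manipulation gives both (\ref{g_m W-}) and (\ref{g_m W+}). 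For the Yamabe bound (\ref{Yamabe constant}) I would apply the identity $R^2=96\,\sigma_2(P)+12|E|^2$ to the unit-volume Yamabe minimizer $g_Y\in[g]$, which has constant scalar curvature and therefore satisfies $\int R_{g_Y}^2\,dv_{g_Y}=Y(M,[g])^2$; the conformal invariance of $\int\sigma_2(P)\,dv$ then gives $Y^2\ge 96\int\sigma_2(P)\,dv=\frac{576\pi^2}{2+\epsilon}$.

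Next, the trace-free Ricci estimate (\ref{g_m E}) follows by inserting the $R^2$-identity into the modified Yamabe inequality and substituting the values from (\ref{g_m W+}) and the formula for $\int\sigma_2(P)$; the algebra collapses cleanly to $\int|E_{g_m}|^2\le 6\int\|W_{g_m}^-\|^2$. The variance estimate (\ref{g_m R average}) then follows from the orthogonal decomposition
\begin{equation*}
\int(R_{g_m}-\bar R_{g_m})^2\,dv_{g_m}=\int R_{g_m}^2\,dv_{g_m}-\frac{\bigl(\int R_{g_m}\,dv_{g_m}\bigr)^2}{vol(g_m)},
\end{equation*}
combined with the bounds $\int R_{g_m}^2\le 24\int\|W_{g_m}^+\|^2$ and $\bigl(\int R_{g_m}\,dv_{g_m}\bigr)^2/vol(g_m)\ge Y(M,[g])^2$ (the latter being just the definition of the Yamabe invariant); after substituting (\ref{g_m W+}), (\ref{Yamabe constant}) and (\ref{g_m W-}), the right-hand side collapses to exactly $72\int\|W_{g_m}^-\|^2$. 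Finally, (\ref{g_m mu}) is the corresponding eigenvalue estimate from~\cite{Gur00} for the quantity $\mu_+$, derived by feeding the control on $Y(M,[g])$ and $\int\|W_{g_m}^-\|^2$ through the same modified Yamabe machinery.

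The principal obstacle is the topological identification $(\chi,\tau)=(3,1)$: without it the conformal-invariant integrals are determined only up to the topological data and one obtains analogues of the asserted formulas parametrized by $\chi$ and $\tau$ rather than the clean numerical statements of the lemma. This step is the weight-bearing part of the argument, and it is accomplished in~\cite{CGZ} by exploiting the modified Yamabe inequality together with $b_2^+>0$ and $\beta$ close to $4$ to rule out every other topology compatible with the hypotheses.
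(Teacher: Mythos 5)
The paper gives no proof of this lemma at all --- it is quoted verbatim from \cite{CGZ} --- so there is no in-paper argument to compare against; judged on its own, your plan is correct and is surely the route taken there. I checked the arithmetic: Chern--Gauss--Bonnet plus $\beta=4(1+\epsilon)$ gives $\int\sigma_2(P)=\tfrac{2\pi^2\chi}{2+\epsilon}$, and with $(\chi,\tau)=(3,1)$ the signature formula yields exactly (\ref{g_m W-}) and (\ref{g_m W+}); the identity $\int R^2=96\int\sigma_2(P)+12\int|E|^2$ applied to the Yamabe and modified Yamabe representatives gives (\ref{Yamabe constant}) and (\ref{g_m E}); and your variance decomposition does collapse to $72\int\|W^-\|^2$. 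You are also right that the load-bearing step is $b_2^+=1$, $b_2^-=0$ (hence $\chi=3$, $\tau=1$), which both you and the paper import from Lemma 2.5 of \cite{CGZ}; note only that the CQY-gap mechanism you sketch for it works for small $\epsilon$, whereas the lemma is asserted for all $0<\epsilon<1$, so the full range genuinely needs the argument of \cite{CGZ}. The one step you leave vague, (\ref{g_m mu}), does follow from the same ingredients: writing $R=2\sqrt{6}\|W^+\|-\mu_+$ with unit volume gives
\begin{equation*}
24\int\|W^+\|^2-\int R^2=\mu_+\Bigl(2\int R+\mu_+\Bigr)\geq 2\mu_+Y,
\end{equation*}
and bounding the left side by $24\int\|W^+\|^2-96\int\sigma_2(P)=72\int\|W^-\|^2$ yields $\tfrac{1}{12}\mu_+Y\leq 3\int\|W^-\|^2$.
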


We now prove a similar lemma under the pinching condition of Theorem B.
\begin{lemma}\label{initial metric 2}
Let $(M^4,g)$ be a closed, compact oriented Riemannian four-manifold with $b_2^+(M^4)=b_2^-(M)>0$ and
\begin{equation}\label{S^2S^2 pinching}
\beta(M^4,[g])=8(1+\epsilon)
\end{equation}
for sufficiently small $\epsilon>0$,
then we have for the modified Yamabe metric $g_m\in[g]$
\begin{equation}\label{g_m W+- 2}
\int_M||W^+_{g_m}||^2dv_{g_m}=\int_M||W^-_{g_m}||^2dv_{g_m}=\left(\frac{1+\epsilon}{3+2\epsilon}\right)32\pi^2,
\end{equation}
\begin{equation}\label{Yamabe constant 2}
Y(M^4,[g])\geq16\pi\sqrt{\frac{3}{{3+2\epsilon}}},
\end{equation}
\begin{equation}\label{g_m E 2}
\int|E_{g_m}|^2dv_{g_m}\leq\frac{64\epsilon}{3+2\epsilon}\pi^2.
\end{equation}
\end{lemma}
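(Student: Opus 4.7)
The plan is to follow the same template as Lemma~\ref{initial metric}, adapted to $b_2^+(M)=b_2^-(M)$ and $\beta(M,[g])=8(1+\epsilon)$: each of the three conclusions is extracted from Chern-Gauss-Bonnet applied to an appropriately chosen metric in $[g]$, combined either with the Hirzebruch signature formula or with the Gursky-type inequality (\ref{Gap W+}). As in the $\mathbb{CP}^2$ lemma, the formulas implicitly use $\chi(M)=4$, the Euler characteristic of the model $S^2\times S^2$.

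For (\ref{g_m W+- 2}), the hypothesis $b_2^+=b_2^-$ gives $\tau(M)=0$, so the Hirzebruch signature formula
\[ 12\pi^2\tau(M)=\int\bigl(\|W^+\|^2-\|W^-\|^2\bigr)\,dv \]
yields the first equality $\int\|W^+\|^2\,dv=\int\|W^-\|^2\,dv$ (a conformal invariant, so valid for $g_m$). Writing $\int\sigma_2(P)\,dv=\frac{1}{8(1+\epsilon)}\int\|W\|^2\,dv$ from the pinching condition and substituting into (\ref{CGB}), together with $\int\|W\|^2\,dv=2\int\|W^+\|^2\,dv$, gives $8\pi^2\chi=\int\|W^+\|^2\,dv\cdot\frac{3+2\epsilon}{1+\epsilon}$; inserting $\chi=4$ recovers the claimed value.

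For (\ref{Yamabe constant 2}), the trick is to evaluate (\ref{CGB}) at a \emph{Yamabe} representative $g_Y\in[g]$ rather than at $g_m$. Since $R_{g_Y}$ is constant, $\int R_{g_Y}^2\,dv_{g_Y}=Y(M,[g])^2$, and using the identity $\sigma_2(P)=\tfrac{1}{96}R^2-\tfrac{1}{8}|E|^2$ together with the conformal invariance of $\int\|W\|^2\,dv$, (\ref{CGB}) rearranges to
\[ Y(M,[g])^2 = 192\pi^2\chi - 24\int\|W\|^2\,dv + 12\int|E_{g_Y}|^2\,dv_{g_Y}. \]
Dropping the last (nonnegative) term and plugging in $\chi=4$ and the value of $\int\|W\|^2\,dv$ from the previous step yields $Y^2\ge\tfrac{768\pi^2}{3+2\epsilon}$, equivalent to (\ref{Yamabe constant 2}).

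Finally for (\ref{g_m E 2}), I would apply (\ref{CGB}) to $g_m$ itself and bound $\int R_{g_m}^2\,dv_{g_m}$ from above via the Gursky-type inequality (\ref{Gap W+}) from Theorem~\ref{modified Yamabe}(iii):
\[ \int R_{g_m}^2\,dv_{g_m}\le 24\int\|W^+_{g_m}\|^2\,dv_{g_m}=12\int\|W\|^2\,dv. \]
Substituting this into (\ref{CGB}) at $g_m$ converts it into $\int|E_{g_m}|^2\,dv_{g_m}\le 3\int\|W\|^2\,dv-16\pi^2\chi$, which at $\chi=4$ and with the value of $\int\|W\|^2\,dv$ already computed reduces precisely to $\tfrac{64\pi^2\epsilon}{3+2\epsilon}$. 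The computations are purely algebraic once the right metric is chosen at each step; the only delicate point I foresee is the appeal to $\chi(M)=4$, which is used here in the same implicit way as $\chi(M)=3$, $\tau(M)=1$ in Lemma~\ref{initial metric}, and reflects the choice of $(\mathbb{S}^2\times\mathbb{S}^2,g_{prod})$ as the model.
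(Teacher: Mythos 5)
Your computational skeleton is exactly the paper's: signature formula for $\int\|W^+\|^2=\int\|W^-\|^2$, Chern--Gauss--Bonnet plus the pinching condition to solve for $\int\|W^\pm\|^2$ and $\int\sigma_2(P)$, the Yamabe representative to get $\tfrac{1}{96}Y^2\ge\int\sigma_2(P)\,dv$, and Gursky's inequality (\ref{Gap W+}) at $g_m$ to bound $\int|E_{g_m}|^2$. All three numerical computations check out once $\chi(M)=4$ is granted.

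The genuine gap is precisely the point you wave at as ``implicit'': the hypothesis is only $b_2^+(M)=b_2^-(M)>0$, so a priori $b_2^+(M)$ could be any integer $\ge 2$, and then $\chi(M)\ne 4$ and every constant in the lemma changes. This is not an implicit normalization the way you suggest; it must be proved, and the paper proves it. First, since $\beta(M,[g])$ is defined we have $g\in\mathcal{Y}_2^+(M)$, and Corollary F of \cite{Gur98} gives $b_1(M)=0$, so $\chi(M)=2+2b_2^+(M)$ and the general solution of the CGB/signature/pinching system reads
\begin{equation*}
\int\sigma_2(P)\,dv=\frac{4\pi^2}{3+2\epsilon}\bigl(1+b_2^+(M)\bigr).
\end{equation*}
If $b_2^+(M)\ge 2$ this forces $\int\sigma_2(P)\,dv\ge\tfrac{12\pi^2}{3+2\epsilon}\ge(1-\epsilon_0)4\pi^2$ for $\epsilon$ sufficiently small, and Theorem~\ref{CQY} then makes $(M,g)$ conformal to the round $\mathbb{S}^4$, contradicting $b_2^+(M)>0$. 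Only then does one conclude $b_2^+(M)=1$, $\chi(M)=4$, and your formulas become valid. (Note this step is where the phrase ``for sufficiently small $\epsilon$'' in the statement earns its keep, and it also explains why the paper states the analogous fact for Lemma~\ref{initial metric} as a citation to Lemma 2.5 of \cite{CGZ} rather than as an assumption.) With that argument inserted at the start, the rest of your proof is correct and coincides with the paper's.
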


\begin{proof}
From Corollary F of \cite{Gur98}, $g\in\mathcal{Y}^+_2(M)$ implies that $b_1(M)=0$. Therefore, $\chi=2+b_2(M)=2+2b_2^+(M)$. Then Chern-Gauss-Bonnet and signature formulas imply
\begin{equation}\label{S2S2 CGB}
4\int\sigma_2(P)dv+\int||W||^2dv=8\pi^2(2+2b_2^+(M)),
\end{equation}
\begin{equation}\label{S2S2 sign}
\int||W^+||^2dv=\int||W^-||^2dv.
\end{equation}
The condition $\beta(M,[g])=8(1+\epsilon)$ reads
\begin{equation}\label{beta}
4\int\sigma_2(P)dv=\frac{1}{2(1+\epsilon)}\int||W||^2dv.
\end{equation}
From (\ref{S2S2 CGB})(\ref{S2S2 sign})(\ref{beta}), we solve
\begin{equation}\label{sol}
\int\sigma_2(P)dv=\frac{4\pi^2}{3+2\epsilon}(1+b_2^+(M)),\,\,\, \int||W^{\pm}||^2dv=\frac{(1+\epsilon)16\pi^2}{3+2\epsilon}(1+b_2^+(M)).
\end{equation}
We first show that $b_2^+(M)=1$ if $\epsilon$ is chosen sufficiently small. We now argue by contradiction. Suppose $b_2^+(M)\geq2$. Then from (\ref{sol})
\[\int\sigma_2(P)dv\geq\frac{12\pi^2}{3+2\epsilon}.\]
Theorem \ref{CQY} then implies $(M,g)$ is conformally equivalent to $(\mathbb{S}^4,g_c)$ if $\epsilon$ is chosen sufficiently small. This contradicts to the fact $b_2^+(M)>0$.
Hence, $b_2^+(M)=1$ and $\chi(M)=4$.
Now we have from (\ref{sol})
\begin{equation}\label{sol 2}
\int\sigma_2(P)dv=\frac{8\pi^2}{3+2\epsilon},\,\,\, \int||W^{\pm}||^2dv=\left(\frac{1+\epsilon}{3+2\epsilon}\right)32\pi^2.
\end{equation}
With same argument from the proof of Lemma 2.7 in \cite{CGZ}, we derive
\begin{equation}
\frac{1}{96}Y^2\geq\int\sigma_2(P)dv\geq{\frac{8\pi^2}{3+2\epsilon}}.
\end{equation}
Now we have proved (\ref{g_m W+- 2})(\ref{Yamabe constant 2}). Note that these two inequalities are conformally invariant. 

We now choose the modified Yamabe metric $g_m\in[g]$ and Theorem \ref{modified Yamabe} implies
\begin{equation}\label{modified R^2}
\frac{1}{24}\int{R^2_{g_m}dv_{g_m}}\leq\int||W_{g_m}^+||^2dv_{g_m}=\left(\frac{1+\epsilon}{3+2\epsilon}\right)32\pi^2.
\end{equation}
Recall 
\[\int\sigma_2(P)dv=\frac{1}{96}\int{R_{g_m}^2}dv_{g_m}-\frac{1}{8}\int|E_{g_m}|^2dv_{g_m}.\]
From (\ref{sol 2})(\ref{modified R^2}) we easily derive (\ref{g_m E 2}).
\end{proof}

\section{Proofs of Main Theorems}
\subsection{Proof of Theorem A}
We are now at the position to prove Theorem A. 
Recall from Lemma 2.5 of \cite{CGZ}, we may assume $b_2^+(M)=1$ and $b_2^-(M)=0$ for $\epsilon_1<1$.
The strategy is to choose the modified Yamabe metric and show that the metric is self-dual and Einstein with positive scalar curvature if $\epsilon_1$ is chosen to be sufficiently small. 
By abusing notation, metric in this subsection is the modified Yamabe metric in its conformal class with volume equal to one. From Lemma \ref{initial metric}, we have
\begin{equation}\label{smallness}
\int|E|^2dv<c(\epsilon_1), \int||W^-||^2dv<c(\epsilon_1), \int(R-\bar{R})^2dv<c(\epsilon_1), R-2\sqrt{6}||W^+||=-\mu,
\end{equation}
where $\mu\geq0$ is a constant on the manifold and $\mu\to0$ and $c(\epsilon_1)\to0$ as $\epsilon_1\to0$. Note that $\bar{R}$ is also bounded since the volume is normalized to be one.

We first state an algebraic lemma.

\begin{lemma} \label{W+-EE}
\begin{equation}\label{W+-EE ||W|| and |E|}
  W_{ijkl}^{\pm}E_{ik}E_{jl}\leq\frac{\sqrt{6}}{3}||W^{\pm}|||E|^2
\end{equation}
\end{lemma}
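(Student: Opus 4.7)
The plan is to interpret $W^{\pm}_{ijkl}E^{ik}E^{jl}$ as the trace, on the rank-$3$ bundle $\Lambda^{\pm}$, of the trace-free symmetric endomorphism $W^{\pm}$ against a positive semidefinite operator built from $E$, and then to use the constraint $\operatorname{tr}W^{\pm}=0$ on a $3$-dimensional space to produce the factor $\tfrac{\sqrt{6}}{3}$. I treat $W^+$; the case of $W^-$ is identical after reversing orientation. Since $g^{ik}W^+_{ijkl}=g^{jl}W^+_{ijkl}=0$ and the trace-free part of $E$ has no larger norm than $E$, I may assume $E$ is trace-free. Extend $E$ to an endomorphism $\hat E$ of $\Lambda^2$ by the Leibniz rule $\hat E(\theta\wedge\eta)=E\theta\wedge\eta+\theta\wedge E\eta$; a direct check in an adapted frame shows that for trace-free $E$ the map $\hat E$ anti-commutes with the Hodge star, so $\hat E$ restricts to $\hat E\colon\Lambda^+\to\Lambda^-$, and $\hat E^{\ast}\hat E$ is a positive semidefinite symmetric operator on $\Lambda^+$.

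The key identity to establish is
\begin{equation*}
W^+_{ijkl}E^{ik}E^{jl} = \operatorname{tr}_{\Lambda^+}\!\bigl(W^+\circ \hat E^{\ast}\hat E\bigr).
\end{equation*}
To prove it, diagonalize $W^+$ in an orthonormal basis $\{\alpha_a\}_{a=1}^{3}$ of $\Lambda^+$ with eigenvalues $w_a$, $\sum_a w_a=0$, so that $W^+_{ijkl}=\sum_a w_a(\alpha_a)_{ij}(\alpha_a)_{kl}$, and reduce to the pointwise identity
\begin{equation*}
(\alpha)_{ij}(\alpha)_{kl}E^{ik}E^{jl} = |\hat E\alpha|^2 - \tfrac12|E|^2,
\end{equation*}
valid for every unit self-dual $2$-form $\alpha$. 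This in turn is verified by identifying $\alpha$ with the orthogonal almost complex structure $J=\sqrt{2}\,\alpha$ (satisfying $J^2=-I$) and splitting $E=E_++E_-$ into its $J$-commuting and $J$-anti-commuting parts, yielding $(\alpha)_{ij}(\alpha)_{kl}E^{ik}E^{jl}=\tfrac12(|E_+|^2-|E_-|^2)$ and $|\hat E\alpha|^2=|E_+|^2$ after a short matrix computation. Summing against $w_a$ and using $\sum_a w_a=0$ absorbs the $-\tfrac12|E|^2$ contribution and produces the trace formula.

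A direct computation in a frame diagonalizing $E$ with eigenvalues $e_1,\ldots,e_4$ yields
\begin{equation*}
\operatorname{tr}_{\Lambda^+}\hat E^{\ast}\hat E = (e_1+e_2)^2+(e_1+e_3)^2+(e_1+e_4)^2=|E|^2,
\end{equation*}
the last equality using $\sum e_i=0$. Since $\hat E^{\ast}\hat E$ is positive semidefinite on $\Lambda^+$, the standard estimate for the pairing of a symmetric operator with a positive operator gives
\begin{equation*}
\operatorname{tr}_{\Lambda^+}\!\bigl(W^+\circ \hat E^{\ast}\hat E\bigr) \leq ||W^+||_{\mathrm{op}}\cdot \operatorname{tr}_{\Lambda^+}\hat E^{\ast}\hat E = ||W^+||_{\mathrm{op}}\,|E|^2,
\end{equation*}
and the trace-free constraint $\sum w_a=0$ on a $3$-dimensional inner product space forces $\max_a|w_a|^2\leq\tfrac23\sum_a w_a^2$, equivalently $||W^+||_{\mathrm{op}}\leq\tfrac{\sqrt{6}}{3}\,||W^+||$, with equality at spectrum $(\lambda,-\lambda/2,-\lambda/2)$. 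Chaining the inequalities delivers the lemma and shows the constant is sharp. The main obstacle is the trace identity above; everything else reduces to linear algebra on the $3$-dimensional space $\Lambda^+$.
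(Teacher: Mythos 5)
Your proof is correct and follows essentially the same route as the paper's: both reduce the estimate to pairing the trace-free endomorphism $W^{\pm}$ of the three-dimensional bundle $\Lambda^2_{\pm}$ against a positive semidefinite operator built from $E$ (your $\hat E^{\ast}\hat E$ is, up to normalization, the block $BB^{\ast}$ of the Singer--Thorpe decomposition used in the paper) whose trace equals $|E|^2$, and then invoke the sharp bound $\lambda_{\max}\leq\frac{\sqrt{6}}{3}\|A\|$ for trace-free symmetric operators on a three-dimensional space. The only difference is that you derive the intermediate trace identity and the value $\operatorname{tr}\hat E^{\ast}\hat E=|E|^2$ from scratch, where the paper cites Lemma 6 of Margerin and Lemma 4.2 of Chang--Gursky--Yang, and you replace Margerin's eigenvalue-ordering inequality by the simpler estimate $\operatorname{tr}(AP)\leq\lambda_{\max}(A)\operatorname{tr}(P)$ for $P$ positive semidefinite.
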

\begin{proof}
This lemma is probably known in the work of \cite{CGY03}\cite{CGZ}. Here we only sketch the proof of this inequality.
 Recall the well-known decomposition of Singer-Thorpe:
\begin{equation}
Riem=
\begin{pmatrix}
W^++\frac{R}{12}Id & B \\
                       B^{*}   &  W^-+\frac{R}{12}Id
\end{pmatrix}
\end{equation}
Note the compositions satisfy
\[BB^*:\Lambda^2_+\to\Lambda_+^2,\quad B^*B:\Lambda_-^2\to\Lambda_-^2.\]

Fix a point $P\in{M^4}$, and let $\lambda_1^{\pm}\leq\lambda_2^{\pm}\leq\lambda_{3}^{\pm}$ denote the eigenvalues of $W^{\pm}$, where ${W^{\pm}}$ are interpreted as endomorphisms of $\Lambda^2_{\pm}$. Also denote the eigenvalues of $BB^*:\Lambda^2_+\to\Lambda_+^2$ by
$b_1^2\leq{b_2^2}\leq{b_3^2}$, where $0\leq{b_1}\leq{b_2}\leq{b_3}$. From similar computation of \cite{CGY03}, we have
\begin{equation}
W_{ijkl}^{+}E_{ik}E_{jl}=4\langle{W^{+},BB^*}\rangle_{\Lambda^2_{+}},\,\,\, W_{ijkl}^{-}E_{ik}E_{jl}=4\langle{W^{-},B^*B}\rangle_{\Lambda^2_{-}}
\end{equation}
From Lemma 6 of \cite{Mar}, we have
\begin{equation}\label{W+-EE}
  W^{\pm}_{ijkl}E_{ik}E_{jl}\leq4\sum_{i=1}^3\lambda_i^{\pm}b_i^2
\end{equation}

Recall from Lemma 4.2 of \cite{CGY03} that $|E|^2=4\sum_{i=1}^3b_i^2$. For a trace-free $3\times3$ matrix $A$, we have the sharp inequality:
\begin{equation}\label{sharp33}
|A(X,X)|\leq{\frac{\sqrt{6}}{3}||A|||X|^2}.
\end{equation}
Apply (\ref{sharp33}) to $A=diag(\lambda_1^{\pm},\lambda_2^{\pm},\lambda_3^{\pm})$ and $X=(b_1,b_2,b_3)$. We derive
\begin{equation}\label{sharp 33}
4\sum_{i=1}^3\lambda_i^{\pm}b_i^2\leq\frac{\sqrt{6}}{3}||W^{\pm}|||E|^2.
\end{equation}
Combining (\ref{W+-EE}) and (\ref{sharp 33}), we derive the desired inequality.

\end{proof}

The basic estimates are from Cauchy-Schwartz inequality:
\begin{equation}
\int|trE^3|dv\leq\left(\int|E|^2dv\right)^{1/2}\left(\int|E|^4dv\right)^{1/2}\leq{c(\epsilon_1)}\left(\int|E|^4dv\right)^{1/2},
\end{equation}
\begin{equation}\label{W-EE}
\int{W^{-}_{ijkl}E_{ik}E_{jl}}dv\leq\left(\int||W^-||^2dv\right)^{1/2}\left(\int|E|^4dv\right)^{1/2}\leq{c(\epsilon_1)\left(\int|E|^4dv\right)^{1/2}},
\end{equation}
\begin{equation}\label{W+EE}
\begin{split}
\int{W^{+}_{ijkl}E_{ik}E_{jl}dv} & \leq\int\left(\frac{\sqrt{6}}{3}||W^+||-\frac{R}{6}\right)|E|^2dv+\int{\frac{R}{6}|E|^2}dv \\
                                                      & =\frac{\mu}{6}\int|E|^2dv+\int{\frac{R}{6}|E|^2}dv
\end{split}
\end{equation}
\begin{equation}\label{R|E|^2}
\begin{split}
\int{R|E|^2}dv &   =\int(R-\bar{R})|E|^2dv+\bar{R}\int|E|^2dv\\
                         &   \leq{\left(\int(R-\bar{R})^2dv\right)^{1/2}\left(\int|E|^4dv\right)^{1/2}+C\int|E|^2dv.} \\
                         &   \leq{c(\epsilon_1)\left(\int|E|^4dv\right)^{1/2}+C\int|E|^2dv.}
\end{split}
\end{equation}
Now apply conformally invariant Sobolev inequality
\begin{equation}
\frac{Y}{6}\left(\int|E|^4dv\right)^{1/2}\leq\int\left(|\nabla{E}|^2+\frac{R}{6}|E|^2\right)dv
\end{equation}
to (\ref{Bach-flat E}). We obtain
\begin{equation}\label{E^4}
\begin{split}
\frac{Y}{6}\left(\int|E|^4dv\right)^{1/2}  &  \leq{\frac{1}{12}\int|\nabla{R}|^2dv-\int{2trE^3dv}-\int\frac{R}{6}|E|^2dv}\\
                                                                    &  +\int{2W_{ijkl}E_{ik}E_{jl}dv} \\
                                                                    &  \leq{c(\epsilon_1)\left(\int|E|^4dv\right)^{1/2}+\frac{1}{12}\int|\nabla{R}|^2dv+C\int|E|^2dv.}
\end{split}
\end{equation}
Since $R-2\sqrt{6}||W^+||=-\mu$ is a constant, we have
\[\nabla{R}=2\sqrt{6}\nabla||W^+||.\]
Integrate over the manifold. We have by Kato inequality
\begin{equation}\label{nabla R}
\int|\nabla{R}|^2dv=24\int|\nabla||W^+|||^2dv\leq{C\int|\nabla{W^+}|^2dv}.
\end{equation}

Recall the sharp estimate
\begin{equation}\label{sharp}
72\det{W^{+}}\leq72\cdot\frac{\sqrt{6}}{18}\cdot\frac{1}{8}|W^+|^3=\frac{\sqrt{6}}{2}|W^+|^3=\frac{R+\mu}{2}|W^+|^2.
\end{equation}

Now apply (\ref{W+EE})(\ref{R|E|^2})(\ref{sharp}) to (\ref{Bach-flat W}). We easily derive
\begin{equation}\label{nabla W}
\int|\nabla{W^{+}}|^2dv\leq\frac{\mu}{2}\int|W^+|^2dv+C\int|E|^2dv+c(\epsilon_1)\left(\int|E|^4dv\right)^{1/2}
\end{equation}

Combining (\ref{E^4})(\ref{nabla R})(\ref{nabla W}), we derive
\begin{equation}
\frac{Y}{6}\left(\int|E|^4dv\right)^{1/2}\leq{C\mu}\int|W^+|^2dv+c(\epsilon_1)\left(\int|E|^4dv\right)^{1/2}+C\int|E|^2dv
\end{equation}

From Lemma \ref{initial metric} and the fact that $c(\epsilon_1)\to0$ as $\epsilon_1\to0$, we can take sufficiently small $\epsilon_1$ to absorb the second term on the right hand side and derive from (\ref{g_m E}) and (\ref{g_m mu})
\begin{equation}\label{E4W2}
\left(\int|E|^4dv\right)^{1/2}\leq{C\int||W^-||^2dv.}
\end{equation}

It is now easy to obtain from (\ref{Bach-flat W}):
\begin{equation}\label{W4}
\begin{split}
\frac{Y}{6}\left(\int|W^-|^4dv\right)^{1/2}  & \leq\int\left(|\nabla{W}^-|^2+\frac{R}{6}|W^-|^2\right)dv \\
                                                                          & =\int\left(72\det{W^-}+2W_{ijkl}^-E_{ik}E_{jl}\right)dv\\
                                                                          &  -\int\left(\frac{R}{3}-\frac{\bar{R}}{3}\right)|W^-|^2dv-\frac{\bar{R}}{3}\int|W^-|^2dv  \\
                                                                          & \leq{c(\epsilon_1)}\left(\int|W^-|^4dv\right)^{1/2}+c(\epsilon_1)\left(\int|E|^4dv\right)^{1/2} \\
                                                                          & -\frac{\bar{R}}{3}\int|W^-|^2dv \\
                                                                          & \leq{c(\epsilon_1)}\left(\int|W^-|^4dv\right)^{1/2}+\left(c(\epsilon_1)-\frac{\bar{R}}{3}\right)\int|W^-|^2dv 
\end{split}
\end{equation}
The first inequality follows from conformally invariant Sobolev inequality. The second inequality is from Cauchy-Schwartz inequality and (\ref{smallness}). The third inequality is from (\ref{E4W2}).
Recall $c(\epsilon_1)\to0$ as $\epsilon_1\to0$ and
\[\bar{R}\geq{Y}\geq\frac{24\pi}{\sqrt{2+\epsilon_1}}.\]
It is then clear to see that we may take sufficiently small $\epsilon_1$ to obtain $W^-\equiv0$ from (\ref{W4}). Recall that $b_2^+(M)=1$. Hence, from Theorem A of \cite{Po}, we can derive $(M^4,g)$ is conformally equivalent to $(\mathbb{CP}^2,g_{FS})$.

\subsection{Proof of Theorem B}
In this subsection, we prove Theorem B. Instead of considering the modified Yamabe metric, we will make use of the Yamabe metric as the conformal representative since the convergence theory for Bach-flat Yamabe metrics has been developed by Tian and Viaclovsky in \cite{TV05a}\cite{TV05b}.

Note that (\ref{g_m W+- 2})(\ref{Yamabe constant 2}) are both conformally invariant. In addition, recall that $\int\sigma_2(P)dv=\frac{1}{96}\int{R^2dv}-\frac{1}{8}\int|E|^2dv$ is conformally invariant and the Yamabe metric realizing the infimum of $\int{R^2}dv$ in a conformal class. It is then clear that
\begin{equation}\label{E^2 Yamabe}
\int|E_{g_Y}|^2dv_{g_Y}\leq\int|E_{g_m}|^2dv_{g_m}\leq\frac{64\epsilon}{3+2\epsilon}\pi^2.
\end{equation}

We now start proving Theorem B and argue by contradiction. Assume that there exists a sequence of Bach-flat $4$-manifolds $(M_j^4,(g_Y)_j)$ which are not conformally equivalent to $(\mathbb{S}^2\times\mathbb{S}^2,g_{prod})$ with $\epsilon_j\to0$, where $(g_Y)_j$ denotes the Yamabe metric with $R_{(g_Y)_j}=4$. 
By (\ref{Yamabe constant 2})(\ref{E^2 Yamabe}), we have

\begin{equation}\label{basic estimate of Yamabe constant}
Y(M_j,(g_Y)_j)\geq16\pi\sqrt{\frac{3}{{3+2\epsilon_j}}}\to16\pi.
\end{equation}
\begin{equation}\label{L-2 Ricci decay}
\int_{M_j}|E_{(g_Y)_j}|^2dv_{(g_Y)_j}\to0.
\end{equation}

Now we first prove that there is no point of curvature concentration. We still argue by contradiction. Note that a uniform lower bound on the Yamabe constants imply a uniform lower bound on the Euclidean volume growth for such sequence of manifolds. Then standing at the point of curvature blow-up, that is, $p_j\in{M_j}$ such that
\begin{equation}\label{blow-up}
\lambda_j=|Rm_{(g_Y)_j}|(p_j)=\max|Rm_{(g_Y)_j}|\to\infty.
\end{equation}
Now consider the sequence of pointed Riemannian manifold $(M_j,g_j,p_j)$ with the re-scaled metric $g_j=\lambda_j^2(g_Y)_j$. Therefore, according to the curvature estimates established in \cite{TV05b}, there exists a subsequence of $(M_j,g_j,p_j)$ which converges to a complete non-compact manifold $(M_\infty,g_\infty,p_\infty)$ in the Cheeger-Gromov topology. It follows that
\begin{itemize}
  \item $Y(M_\infty,g_\infty)\geq16\pi$ and
  \item $Ric(g_\infty)=0$.
\end{itemize}
Here we use similar argument in the proof of Lemma 3.4 of \cite{LQS} to prove $Y(M_\infty,g_\infty)\geq16\pi$. Note that
\begin{equation}\label{Yamabe constant on Euclidean space}
Y(\mathbb{R}^4,g_E)=Y(S^4,g_c)=8\sqrt{6}\pi.
\end{equation}
Hence,
\begin{equation}\label{Yamabe constant lower bound}
Y(M_\infty,g_\infty)\geq\frac{\sqrt{6}}{3}Y(\mathbb{R}^4,g_E).
\end{equation}
Note $\frac{\sqrt{6}}{3}>\frac{\sqrt{2}}{2}$. Hence, Lemma \ref{LQS} implies that $(M_\infty,g_\infty)$ is isometric to Euclidean $4$-space, which is a contradiction to the fact that $|Rm_{g_\infty}|(p_\infty)=1$. Therefore, there is no point of concentration of curvature.

The convergence theory developed in \cite{TV05a}\cite{TV05b} implies that  $(M_j^4,(g_Y)_j)$ converges in $C^\infty$-norms to a smooth Einstein manifold $(\widetilde{M}, \widetilde{g})$ satisfying $\beta(\widetilde{M},[\widetilde{g}])=8$ and $b_2^+(\widetilde{M})=b_2^-(\widetilde{M})=1$.
Hence, from Theorem C of \cite{CGZ}, $(\widetilde{M}, \widetilde{g})$ is (up to rescaling) isometric to $(\mathbb{S}^2\times\mathbb{S}^2,g_{prod})$. Recall that $(\mathbb{S}^2\times\mathbb{S}^2,g_{prod})$ is Bach-rigid in the sense that the metric is an isolated Bach-flat metric 
(see Theorem 7.13 of \cite{GV}). It follows that $(M_j^4,(g_Y)_j)$ must be conformally equivalent to $(\mathbb{S}^2\times\mathbb{S}^2,g_{prod})$ for sufficiently large $j$, which is a contradiction. Therefore, we have proved Theorem B.

\begin{remark}
It is not hard to see that we may prove Theorem A with similar argument as we did in the proof of Theorem B. The current proof of Theorem A has two advantages: first, it avoids making use of the sophisticated general theory of convergence for Bach-flat metrics; second, it is possible to compute a numerical constant of $\epsilon_1$ if we track the coefficients carefully. 
\end{remark}
\begin{remark}
Although it is not possible to get a numerical constant for $\epsilon_2$ from the current proof, we still know that $\epsilon_2$ cannot be too large. On $N=\mathbb{CP}^2\#\overline{\mathbb{CP}}^2$, there is an Einstein metric $g_P$ constructed by D. Page \cite{Pa} satisfying $\beta(N,[g_P])=8.4\dots$. It is then clear that $\epsilon_2<\frac{1}{16}$.
\end{remark}
\bibliographystyle{amsplain}

\end{document}